%
%
%
%
%
%
\RequirePackage{fix-cm}
\documentclass{svjour3}                     
\smartqed  
\usepackage{graphicx}
\usepackage{mathptmx}      
%
\usepackage{latexsym}
\usepackage{amsfonts}
\usepackage{amsmath}
\usepackage{epstopdf}
\usepackage{algorithmic}
\usepackage[bookmarks, a4paper, colorlinks=true]{hyperref}
%
\newtheorem{thm}{Theorem}

\newtheorem{prop}{Proposition}
\newtheorem{coro}{Corollary}
\newtheorem{rem}{Remark}
\journalname{}
\begin{document}

\title{A stochastic Gronwall's inequality in random time horizon and its application to BSDE
}


\author{Hun O \and
        Mun-Chol Kim   \and
        Chol-Kyu Pak   
}


\institute{Chol-Kyu Pak (corresponding author), \at
              Faculty of Mathematics, Kim Il Sung University, Pyongyang, DPR Korea.\\
              \email{pck2016217@gmail.com}           
           \and
}

\date{Received: date / Accepted: date}

\maketitle

\begin{abstract}
In this paper, we introduce and prove a stochastic Gronwall's inequality in (unbounded) random time horizon. As an application, we prove a comparison theorem for backward stochastic differential equation (BSDE for short) with random terminal time under stochastic monotonicity condition.
\keywords{Gronwall's inequality, stochastic, random time horizon, backward stochastic differential equation, comparison}
\subclass{MSC 60E15 \and MSC 60H20 }
\end{abstract}

\section{Introduction}
Gronwall's inequality is a handy tool to derive many useful results such as uniqueness, comparison, boundness, continuous dependence and stability in the theory of differential and integral equations.  It was first introduced by Gronwall \cite{Gronwall} as a differential form and the integral inequality was proposed by Bellman \cite{bellman}. Since then, many researchers studied the various types of generalizations of this inequality motivated by the development of the differential and integral equations (\cite{Bihari}, \cite{Chen}, \cite{Pachpatte}).
Among such generalizations, we are concerned with the stochastic version of Gronwall's inequality. 
\par Let $(\Omega,\mathcal {F},\mathbb {P})$ be a complete probability space on which a $d$-dimensional Brownian motion $B=(B_t)_{t\ge 0}$ is defined. Let $(\mathcal F_t)_{t\ge 0}$ be the right-continuous completion of the natural filtration generated by $B$, that is,
$\mathcal F_t:=\sigma\{B_s,s\le t\}$ and argument them by $\mathbb P$-null sets. 
\par In Wang and Fan \cite{Wang}, they first proved the following stochastic Gronwall's inequality.  

\begin{prop}\label{prop:1}
Let $c>0$, $T>0$ and an $(\mathcal F_t )_{t\ge 0}$-progressive measurable process $a:\Omega\times [0,T]\rightarrow\mathbb R^+$ satisfy $\int_0^T\!a(t)\,dt\le M,\ \mathbb P-a.s.$ for some constant $M>0$.
If an $(\mathcal F_t)_{t\ge 0}$-progressive measurable process $x:\Omega\times [0,T]\rightarrow\mathbb R^+$ satisfies 
\[
\mathbb E \bigg[\sup_{t\in [0,T]}⁡x(t)\bigg]<+\infty,\  x(t)\le c+\mathbb E\bigg[\int_t^T\!a(s)x(s)\,ds\bigg |\mathcal F_t \bigg],\ t\in [0,T].
\]
then, for each $t\in [0,T]$,
\[
x(t)\le c\cdot \mathbb E[e^{\int_t^T\!a(s)\,ds}|\mathcal F_t ],\ \mathbb P-a.s.
\]
\end{prop}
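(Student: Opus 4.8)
The plan is to iterate the assumed inequality and pass to the limit, following the classical proof of Gronwall's lemma while carrying the conditional expectations throughout. Write $A_{t}^{r}:=\int_{t}^{r}a(s)\,ds$ for $0\le t\le r\le T$, so that $A_{t}^{T}\le M$ a.s., and set $X^{*}:=\sup_{s\in[0,T]}x(s)$, which is integrable by hypothesis; in particular $\mathbb{E}[X^{*}\mid\mathcal{F}_{t}]<\infty$ a.s., and since $\int_{t}^{T}a(s)x(s)\,ds\le M\,X^{*}$, all conditional expectations occurring below are a.s.\ finite. Note also that, the assumed inequality holding for each fixed $t$ $\mathbb{P}$-a.s., a Fubini argument produces a single $\mathbb{P}$-null set outside which it holds for Lebesgue-a.e.\ $t$; this is what allows the inequality to be substituted inside a time integral.

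First I would prove, by induction on $n\ge 0$, that for every $t\in[0,T]$,
\[
x(t)\ \le\ c\sum_{k=0}^{n}\mathbb{E}\!\left[\frac{(A_{t}^{T})^{k}}{k!}\,\Big|\,\mathcal{F}_{t}\right]+\mathbb{E}\!\left[\int_{t}^{T}\frac{(A_{t}^{r})^{n}}{n!}\,a(r)\,x(r)\,dr\,\Big|\,\mathcal{F}_{t}\right],\qquad\mathbb{P}\text{-a.s.}
\]
The base case $n=0$ is exactly the hypothesis. For the inductive step I would substitute the hypothesis at time $r$, i.e.\ $x(r)\le c+\mathbb{E}\big[\int_{r}^{T}a(u)x(u)\,du\mid\mathcal{F}_{r}\big]$, into the integrand of the remainder term. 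Since $\tfrac{(A_{t}^{r})^{n}}{n!}\,a(r)$ is $\mathcal{F}_{r}$-measurable ($a$ being progressively measurable) and all the integrands are nonnegative, the tower property together with the Fubini--Tonelli theorem lets me pull the inner conditional expectation out and interchange the $dr$- and $du$-integrations over the simplex $\{t\le r\le u\le T\}$. The elementary identity $\int_{t}^{r}\frac{(A_{t}^{u})^{n}}{n!}\,a(u)\,du=\frac{(A_{t}^{r})^{n+1}}{(n+1)!}$ then turns the ``$c$''-contribution into $c\,\mathbb{E}\big[\frac{(A_{t}^{T})^{n+1}}{(n+1)!}\mid\mathcal{F}_{t}\big]$, the $(n+1)$-st term of the sum, and upgrades the remainder to order $n+1$, closing the induction.

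Next I would control the remainder. Using $A_{t}^{r}\le A_{t}^{T}\le M$, $x(r)\le X^{*}$ and $\int_{t}^{T}a(r)\,dr\le M$,
\[
0\ \le\ \mathbb{E}\!\left[\int_{t}^{T}\frac{(A_{t}^{r})^{n}}{n!}\,a(r)\,x(r)\,dr\,\Big|\,\mathcal{F}_{t}\right]\ \le\ \frac{M^{\,n+1}}{n!}\,\mathbb{E}[X^{*}\mid\mathcal{F}_{t}],
\]
and since $\mathbb{E}[X^{*}\mid\mathcal{F}_{t}]$ is a.s.\ finite while $M^{\,n+1}/n!\to 0$, the right-hand side tends to $0$ a.s.\ as $n\to\infty$. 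On the other hand, by conditional monotone convergence,
\[
\sum_{k=0}^{n}\mathbb{E}\!\left[\frac{(A_{t}^{T})^{k}}{k!}\,\Big|\,\mathcal{F}_{t}\right]=\mathbb{E}\!\left[\sum_{k=0}^{n}\frac{(A_{t}^{T})^{k}}{k!}\,\Big|\,\mathcal{F}_{t}\right]\ \uparrow\ \mathbb{E}\!\left[e^{A_{t}^{T}}\,\Big|\,\mathcal{F}_{t}\right].
\]
Letting $n\to\infty$ in the inductive estimate then gives $x(t)\le c\,\mathbb{E}\big[e^{\int_{t}^{T}a(s)\,ds}\mid\mathcal{F}_{t}\big]$ $\mathbb{P}$-a.s., which is the assertion (and the right-hand side is automatically bounded by $c\,e^{M}$).

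The step I expect to be the main obstacle is the bookkeeping in the induction: carefully justifying the simultaneous use of the tower property, the unconditional Fubini theorem, and the change of order of integration on the simplex, and making precise the sense in which the hypothesis may be plugged in inside an integral. It is worth emphasizing that the assumption $\mathbb{E}[\sup_{t\in[0,T]}x(t)]<\infty$ enters in two essential ways---first to ensure every conditional expectation above is finite, and second, through $M^{\,n+1}/n!\to0$, to force the remainder term to vanish in the limit.
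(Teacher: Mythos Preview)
Your iteration argument is correct and is, in fact, the classical route: the induction, the tower/Fubini manipulation on the simplex, and the remainder estimate $M^{\,n+1}/n!\to 0$ all go through as you describe. The only point that would benefit from a line of extra care is the joint measurability needed to run the Fubini step on the conditional expectations, but this is standard.

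The paper, however, does \emph{not} prove Proposition~\ref{prop:1} at all: it is quoted from Wang--Fan \cite{Wang} as motivation, and the paper's own contribution is Theorem~\ref{thm:1}, a random-horizon generalisation. It is still worth noting that the paper's method for Theorem~\ref{thm:1} is entirely different from yours. There the authors set $X(t)=\mathbb{E}[\xi+l\int_{t\wedge\tau}^{\tau}a(s)x(s)\,ds\mid\mathcal{F}_{t}]$, use the martingale representation theorem to write $X$ as a backward semimartingale, perform a random time change to obtain $L^{p}$ bounds on the martingale part, and then apply It\^o's formula to $e^{lA(t)}X(t)$; the inequality $x\le X$ kills the drift and the BDG inequality makes the stochastic integral a uniformly integrable martingale, whence the result. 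Your elementary iteration is shorter and needs no stochastic calculus, but it relies crucially on the deterministic bound $\int_{0}^{T}a\le M$ to force the remainder to vanish; the paper's It\^o/BDG approach is heavier but is what allows them to dispense with essential boundedness of $A(\tau)$ and to reach unbounded stopping times under the weighted $L^{p}$ hypotheses of Theorem~\ref{thm:1}.
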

If the random processes $a(t)$ and $x(t)$ are deterministic functions in above proposition, then we reach the well-known Gronwall's inequality as follows.
\begin{coro}
If $a(t)$ and $x(t)$ are two non-negative (deterministic) functions defined on $[0,T]$ which  satisfy
\[
x(t)\le c+\int_t^T\!a(s)x(s)\,ds,\ t\in [0,T].
\]
then, for each $t\in [0,T]$,
\[
x(t)\le c\cdot e^{\int_t^T\!a(s)\,ds}.
\]
\end{coro}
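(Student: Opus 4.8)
The plan is to obtain the Corollary directly from Proposition~\ref{prop:1} by specializing to deterministic data. Regard the two Borel functions $a,x:[0,T]\to\mathbb R^+$ as stochastic processes on $(\Omega,\mathcal F,\mathbb P)$ that happen to be constant in $\omega$; such processes are trivially $(\mathcal F_t)_{t\ge 0}$-progressively measurable. Set $M:=\int_0^T a(s)\,ds$, which we take to be finite (as is implicit for the stated inequality to be meaningful); then the hypothesis $\int_0^T a(t)\,dt\le M$ of Proposition~\ref{prop:1} holds, and $\mathbb E\big[\sup_{t\in[0,T]}x(t)\big]=\sup_{t\in[0,T]}x(t)$, which is finite as soon as $x$ is bounded (the implicit standing assumption on $x$; e.g.\ $x$ continuous on $[0,T]$).

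Next I would observe that, since $a$ and $x$ carry no randomness, every conditional expectation appearing in Proposition~\ref{prop:1} collapses to the quantity itself: $\mathbb E\big[\int_t^T a(s)x(s)\,ds\,\big|\,\mathcal F_t\big]=\int_t^T a(s)x(s)\,ds$ and $\mathbb E\big[e^{\int_t^T a(s)\,ds}\,\big|\,\mathcal F_t\big]=e^{\int_t^T a(s)\,ds}$. Hence the integral inequality assumed in the Corollary is \emph{exactly} the hypothesis of Proposition~\ref{prop:1}, and its conclusion reads $x(t)\le c\cdot e^{\int_t^T a(s)\,ds}$ for every $t\in[0,T]$, the $\mathbb P$-a.s.\ qualifier being vacuous because both sides are deterministic. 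This completes the argument.

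The only delicate point is the finiteness requirement $\mathbb E[\sup_t x(t)]<\infty$ that is inherited from Proposition~\ref{prop:1}, which in the deterministic setting amounts to boundedness of $x$. If one wishes to dispense with it, the Corollary also has a short self-contained proof: put $h(t):=\int_t^T a(s)x(s)\,ds$, so that $h$ is absolutely continuous with $h(T)=0$ and, for a.e.\ $t$, $h'(t)=-a(t)x(t)\ge -a(t)\big(c+h(t)\big)$ using $a\ge 0$ and the assumed inequality; writing $A(t):=\int_0^t a(s)\,ds$, this gives $\frac{d}{dt}\big[e^{A(t)}\big(h(t)+c\big)\big]\ge 0$, hence $e^{A(t)}\big(h(t)+c\big)\le e^{A(T)}c$ for $t\le T$, i.e.\ $h(t)+c\le c\,e^{\int_t^T a(s)\,ds}$, and therefore $x(t)\le c+h(t)\le c\,e^{\int_t^T a(s)\,ds}$. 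I expect the bookkeeping around integrability of $ax$ and absolute continuity of $h$ to be the only thing requiring a little care; the rest is routine.
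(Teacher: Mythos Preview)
Your proposal is correct and mirrors the paper's approach exactly: the paper presents this corollary without a written proof, simply as the specialization of Proposition~\ref{prop:1} to deterministic $a(t)$ and $x(t)$, which is precisely your first argument. Your additional self-contained proof via $h(t)=\int_t^T a(s)x(s)\,ds$ is a bonus the paper does not include, but it is standard and sound.
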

In this paper, we study the complete version of Gronwall's inequality in stochastic sense.  
More precisely, in above Proposition \ref{prop:1}, the constants $c$ and $T$ are replaced by a random variable and (unbounded) stopping time, respectively and the integral of $a(t)$ is not assumed to be essentially bounded.
~\\ We use methods by the martingale representation and random time change to prove the main inequality. 
Due to the type of the proposed inequality, the application to the stochastic differential (or integral) equation with stochastic coefficients defined up to random time (more precisely, stopping time) is naturally considered. We give the proof of a comparison theorem of $L^p$-solutions to backward stochastic differential equation (BSDE for short) with random terminal time and stochastic coefficients by using the stochastic Gronwall's inequality in random time horizon, effectively.

\section{Notations}\label{sec:2}
Let $p>1$ and $\tau$ be an $(\mathcal F_t)_{t\ge 0}$-stopping time. That is,  $\forall t\ge 0, \{\omega|\tau(\omega)\le t\}\in\mathcal F_t$.
Throughout all the paper, $|\cdot|$ means the standard uclidean norm. 
We put $A(t):=\int_0^t\!a(s)\,ds$, where $a(s)$ is a non-negative progressive-measurable process. The symbols $\mathbb E[\cdot]$ and $\mathbb E[\cdot|\mathcal F_t]$ denote the expectation and conditional expectation (with respect to $\mathcal F_t$), respectively.  
\begin{itemize}	
    \item $L_{\theta}^p (\mathcal F_{\tau})$ is the set of real-valued $\mathcal F_{\tau}$-measurable random variables $\xi$ such that 
                \[
                     \mathbb E[e^{p/2 \theta A(\tau)} |\xi|^p]<+\infty. 
                \]
    \item $H_{\tau,\theta}^p (\mathbb R)$ is the set of real-valued c\`adl\`ag, adapted processes $Y$ such that 
               \[
                    \mathbb E\bigg[\sup_{0\le t\le\tau}⁡e^{p/2 \theta A(t)} |Y_t|^p\bigg]<\infty.
               \]
    \item $H_{\tau,\theta}^{p,a}(\mathbb R)$ is the set of real-valued c\`adl\`ag, adapted processes $Y$ such that
               \[
                   \mathbb E\bigg[\bigg(\int_0^\tau\! a(t) e^{\frac{\theta}{2} A(t)} |Y_t|\,dt\bigg)^p\bigg]<\infty.
               \]
    \item $M_{\tau,\theta}^p (\mathbb R^{1\times d})$ is the set of predictable processes $Z$ with values in $\mathbb R^{1\times d}$ such that 
	    \[
	        \mathbb E\bigg[(\int_0^\tau\!e^{\theta A(t)} |Z_t|^2\,ds)^{p/2}\bigg]<\infty.
              \]
    \item For $m,n\in\mathbb R$, $m\wedge n:=\min\{m,n\}$ and $m^+:=\max\{m,0\}$.
\end{itemize}
\section{Main inequality}\label{sec:3}

\begin{thm}\label{thm:1}
Let $p>1,l\ge 0$ be constants and $q$ be a constant such that $1/p+1/q=1$. Let $\tau$ be an $(\mathcal F_t)_{t\ge 0}$-stopping time and $\xi$ be a non-negative random variable. Let $a(t)$ and $x(t)$ be non-negative progressive measurable processes. Assume that $a(t)>\epsilon$ for some constant $\epsilon>0$ and $x(t)$ belongs to $H_{\tau,\theta}^{p,a} (\mathbb R)$. We further assume that $\xi$ belongs to  $L_{\theta}^p (\mathcal F_{\tau})$ for some constant $\theta \ge 0$ satisfying $\mathbb E[e^{\frac{q}{2} (2l-\theta)^+A(\tau))}]<\infty$, where $A(t):=\int_0^t\!a(s)\,ds$.
~\\ If $x(t)\le \mathbb E[\xi+l\int_{t\wedge\tau}^{\tau}\!a(s)x(s)\,ds|\mathcal F_t],\ \mathbb P-a.s.$, then, we have $\mathbb P-a.s.$,
	\[
	x(t)\le\mathbb E[\xi\cdot e^{l\int_{t\wedge\tau}^{\tau}\!a(s)\,ds}|\mathcal F_t].
	\]
\end{thm}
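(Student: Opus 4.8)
The plan is to encode the hypothesis in the trajectories of a closing martingale, cancel the drift with an exponential integrating factor (equivalently, by a random time change normalizing $a$ to $1$), and then read off the conclusion by conditioning; the delicate point will be to justify the limits that appear, which is exactly what the exponential moment bound on $\tau$ is for.

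First I would set $\Xi:=\xi+l\int_0^\tau a(s)x(s)\,ds$. Since $\theta\ge 0$ forces $e^{\theta A/2}\ge 1$, the assumptions $\xi\in L_\theta^p(\mathcal F_\tau)$ and $x\in H_{\tau,\theta}^{p,a}(\mathbb R)$ give $\Xi\in L^p\subset L^1$, so $M_t:=\mathbb E[\Xi\mid\mathcal F_t]$ is a uniformly integrable martingale, and by the martingale representation theorem it has a continuous version $M_t=M_0+\int_0^t Z_s\,dB_s$. Put $Y_t:=M_t-l\int_0^{t\wedge\tau}a(s)x(s)\,ds$. Because $\int_0^{t\wedge\tau}a(s)x(s)\,ds$ is $\mathcal F_t$-measurable, $Y_t=\mathbb E[\xi+l\int_{t\wedge\tau}^\tau a(s)x(s)\,ds\mid\mathcal F_t]$, so the hypothesis says exactly that $x(t)\le Y_t$ a.s.\ for every $t$; likewise $Y_\rho=\mathbb E[\xi+l\int_\rho^\tau a(s)x(s)\,ds\mid\mathcal F_\rho]\ge 0$ for every stopping time $\rho\le\tau$, and $Y_\tau=\xi$ when $\tau<\infty$. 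On $[0,\tau]$, the process $Y$ satisfies $dY_t=Z_t\,dB_t-la(t)x(t)\,dt$.

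Then I would bring in the integrating factor $\Gamma_t:=e^{lA(t)}$, which is finite, continuous and nondecreasing with $d\Gamma_t=la(t)\Gamma_t\,dt$; the assumption $a>\epsilon$ makes $A$ a genuine increasing homeomorphism, so the equivalent route of changing time by $A^{-1}$ to reduce to $a\equiv 1$ (with the deterministic factor $\Gamma_t=e^{lt}$) is also available. It\^o's product rule gives, on $[0,\tau]$,
\[
d(\Gamma_t Y_t)=\Gamma_t Z_t\,dB_t+la(t)\,\Gamma_t\bigl(Y_t-x(t)\bigr)\,dt\ \ge\ \Gamma_t Z_t\,dB_t,
\]
since $Y_t\ge x(t)$ for a.e.\ $t\le\tau$. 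Picking a localizing sequence $\rho_m\uparrow\tau$ along which $\Gamma$ stays bounded, say $\rho_m:=\tau\wedge m\wedge\inf\{t:A(t)\ge m\}$, integrating from $t$ to $\rho_m$ and taking $\mathbb E[\,\cdot\mid\mathcal F_t]$ --- where $\int_t^{\rho_m}\Gamma_s Z_s\,dB_s$ is a true martingale with zero conditional expectation because $\Gamma$ is bounded on $[0,\rho_m]$ and $\mathbb E[\langle M\rangle_\infty^{1/2}]<\infty$ (Doob and Burkholder--Davis--Gundy) --- I get $\Gamma_t Y_t\le\mathbb E[\Gamma_{\rho_m}Y_{\rho_m}\mid\mathcal F_t]$ on $\{t\le\rho_m\}$. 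Letting $m\to\infty$: $Y_{\rho_m}\to\xi$ and $\Gamma_{\rho_m}\uparrow\Gamma_\tau$ a.s., while $0\le\Gamma_{\rho_m}Y_{\rho_m}=\mathbb E\bigl[\Gamma_{\rho_m}\xi+l\int_{\rho_m}^\tau\Gamma_{\rho_m}a(s)x(s)\,ds\mid\mathcal F_{\rho_m}\bigr]\le\mathbb E[\Psi\mid\mathcal F_{\rho_m}]$ with $\Psi:=\Gamma_\tau\xi+l\int_0^\tau\Gamma_s a(s)x(s)\,ds$ (using $\Gamma_{\rho_m}\le\Gamma_s$ for $s\ge\rho_m$). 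A (generalized) dominated convergence argument for conditional expectations then gives $\Gamma_t Y_t\le\mathbb E[\Gamma_\tau\xi\mid\mathcal F_t]$ on $\{t\le\tau\}$, i.e.\ $x(t)\le Y_t\le\mathbb E\bigl[\xi\,e^{l\int_{t\wedge\tau}^\tau a(s)\,ds}\mid\mathcal F_t\bigr]$ there; on $\{t>\tau\}$ one has $Y_t=\mathbb E[\xi\mid\mathcal F_t]$, which is again the claimed right-hand side because $\int_{t\wedge\tau}^\tau a(s)\,ds=0$ there. This is the desired inequality.

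The crux is the integrability bookkeeping behind the last limits, namely the finiteness of $\mathbb E[\Psi]$, and this is exactly where the hypothesis $\mathbb E[e^{\frac q2(2l-\theta)^+A(\tau)}]<\infty$ is needed. Writing $e^{lA(s)}=e^{(l-\theta/2)A(s)}e^{\theta A(s)/2}$ and bounding $e^{(l-\theta/2)A(s)}\le e^{(l-\theta/2)^+A(\tau)}$ for $s\le\tau$, H\"older's inequality with exponents $p,q$ controls $\mathbb E[\Gamma_\tau\xi]$ and $\mathbb E\bigl[\int_0^\tau\Gamma_s a(s)x(s)\,ds\bigr]$ by products of $\bigl(\mathbb E[e^{\frac q2(2l-\theta)^+A(\tau)}]\bigr)^{1/q}$ with the finite norms defining $L_\theta^p(\mathcal F_\tau)$ and $H_{\tau,\theta}^{p,a}(\mathbb R)$ (note $2(l-\theta/2)^+=(2l-\theta)^+$). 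The same estimate yields $\Xi\in L^p$, hence $\mathbb E[\langle M\rangle_\infty^{1/2}]<\infty$, and supplies the dominating functions used in the convergence steps; with that in hand the argument closes.
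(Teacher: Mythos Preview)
Your argument is correct and follows the same global architecture as the paper's proof: represent the upper envelope $Y_t=\mathbb E[\xi+l\int_{t\wedge\tau}^\tau a(s)x(s)\,ds\mid\mathcal F_t]$ via the martingale representation theorem, apply It\^o's formula with the integrating factor $e^{lA(t)}$ to kill the drift, and read off the bound after conditioning, with the exponential moment hypothesis $\mathbb E[e^{\frac q2(2l-\theta)^+A(\tau)}]<\infty$ used through H\"older to control the relevant integrals.

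The genuine difference lies in how you justify taking conditional expectation in the key inequality. The paper invests substantial effort in proving that the stochastic integral $\int_0^{t\wedge\tau}e^{lA(s)}Z_s\,dB_s$ is itself a uniformly integrable martingale: it first establishes $Z\in M_{\tau,\theta}^p(\mathbb R^{1\times d})$ by a random time change (normalizing $a$ to $1$) combined with an a~priori $L^p$-estimate for BSDEs from Briand et al., and then applies BDG together with Young's inequality and the exponential moment bound. You instead bypass the $M_{\tau,\theta}^p$ estimate entirely by localizing with $\rho_m$ (so the weighted integral is a true martingale on $[0,\rho_m]$ for free, using only $\mathbb E[\langle M\rangle_\infty^{1/2}]<\infty$) and then passing to the limit via dominated convergence with the explicit majorant $\Psi=\Gamma_\tau\xi+l\int_0^\tau\Gamma_s a(s)x(s)\,ds$, whose integrability follows directly from H\"older. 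Your route is shorter and more elementary, avoiding both the time-change computation and the external BSDE estimate; the paper's route has the side benefit of producing the a~priori bound on $(X,Z)$ in the weighted spaces, which may be of independent use but is not needed for the theorem as stated.
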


\begin{proof}
Define the process $X(t):=\mathbb E[\xi+l\int_{t\wedge\tau}^{\tau}\!a(s)x(s)\,ds|\mathcal F_t]$, then it follows from the assumptions that $x(t)\le X(t)$.
Let us put $\eta:=\xi+l\int_0^\tau\!a(s)x(s)\,ds$. Then,
\[
\mathbb E\bigg[\bigg(\int_0^\tau\!a(s)x(s)\,ds\bigg )^p\bigg]\le\mathbb E\bigg [\bigg(\int_0^\tau\!a(s) x(s) e^{\theta A(s)/2}\,ds\bigg)^p\bigg]<\infty.
\]
Therefore, 
\[
\mathbb E [|\eta|^p]\le 2^{p-1} \bigg (\mathbb E[|\xi|^p]+l^p \mathbb E\bigg [\bigg (\int_0^\tau\!a(s)x(s)\,ds\bigg )^p\bigg ]\bigg)<\infty.
\]
That is, $\eta \in L_0^p (\mathcal F_\tau)$.
By the martingale representation theorem (see \cite{Pardoux}, page 116, Theorem 2.42), there exists a process $Z$ satisfying $Z\in M_{T,0}^p (\mathbb R^{1\times d})$  for any $T>0$ such that $\mathbb P-a.s.$
\[
\mathbb E[\eta|\mathcal F_t]=\mathbb E[\eta]+\int_0^{t\wedge\tau}\!Z_s\,dB_s.
\]
So,
\[
X(t)=\mathbb E[\eta|\mathcal F_t]-l\mathbb E\bigg [\int_0^{t\wedge\tau}a(s)x(s)ds|\mathcal F_t\bigg ]=\mathbb E[\eta]+\int_0^{t\wedge\tau}\!Z_s \,dB_s -l\int_0^{t\wedge\tau}\!a(s)x(s)\,ds.
\]

Moreover, we have the backward version: for all $T>0$,
\[
X(t)=X(T\wedge\tau)-\int_{t\wedge\tau}^{T\wedge\tau}\!Z_s \,dB_s +l\int_{t\wedge\tau}^{T\wedge\tau}\!a(s)x(s)\,ds.
\]
with $X(\tau)=\xi$.

Or equivalently,
\begin{equation}\label{eq:1}
X(t)=\xi-\int_{t\wedge\tau}^\tau\!Z_s\,dB_s +l\int_{t\wedge\tau}^\tau\! a(s)x(s)\,ds.
\end{equation}
Now we shall show that $Z\in M_{\tau,\theta}^p (\mathbb R^{1\times d})$.
First, we introduce a certain random time change.
Due to the fact that the process $A_t=\int_0^t\!a(s)\,ds$ is strictly increasing and continuous, we can define its reverse denoted by $C_s:=A^{-1}(s)$. 
Then a family of stopping times, $\{C_s\}, s\ge 0$ is an $(\mathcal F_t)_{t\ge 0}$-random time change (see \cite{Revuz} for systematic study of random time change). Set $\widetilde{\mathcal F}_t:=\mathcal F_{C_t}$, then $(\widetilde{\mathcal F}_t)_{t\ge 0}$ is a new filtration. 
For any adapted process $X$, we assume that $\widetilde X$ means the time-changed process, that is, $\widetilde X=X_{C_t}$. We also define $\widetilde{\tau}:=A(\tau)$ and $W_t:=\int_0^t\!\sqrt{\widetilde a(s)}\,d\widetilde{B}_s $.
Note that $W=(W^1,W^2,...,W^d)$ is a $d$-dimensional $(\widetilde{\mathcal F}_t)_{t\ge 0}$-Brownian motion by Levy's characterization theorem.
In fact, for each $i\in\{1,…,d\}$,
\begin{align}
<W^i,W^i>&=\int_0^t\!\widetilde a(s)\,d<\widetilde B ^i,\widetilde B^i>_s =\int_0^t\!\widetilde a(s)\,d\widetilde {<B^i,B^i>}_s\nonumber\\
&=\int_0^t\!\widetilde a(s)\,dC_s =\int_0^{C_t}\!a(s)\,ds=A(C_t )=t.\nonumber
\end{align}
From the properties of (stochastic) integral with respect to time change,
\[
\int_{t\wedge\tau}^\tau\!a(s)x(s)\,ds=\int_{A(t\wedge\tau)}^{A(\tau)}\!x(C_s)\,ds=\int_{A(t)\wedge\widetilde\tau}^{\tau}\!\widetilde{x}(s)\,ds,
\]
\[
\int_{t\wedge\tau}^{\tau}\!Z_s\,dB_s =\int_{A(t)\wedge\widetilde\tau}^{\widetilde\tau}\!\widetilde Z_s \,d\widetilde B_s =\int_{A(t)\wedge\widetilde\tau}^{\widetilde\tau}\!\widetilde a(s)^{-1/2}\widetilde Z_s\,dW_s.
\]
So, we get the following expression with respect to $(\widetilde{\mathcal F}_t)_{t\ge 0}$.
\[
\widetilde X(t)=\xi-\int_{t\wedge\widetilde\tau}^{\widetilde\tau}\!\widetilde a(s)^{-1/2} \widetilde Z_s\,dW_s +l\int_{t \wedge\widetilde\tau}^{\widetilde\tau}\!\widetilde x(s)\,ds.
\]

According to \cite{Briand}, Proposition 3.2, for all $\widetilde T>0$,

\begin{align}
\mathbb{E}\biggl [\sup_{t\in [0,\widetilde{T}\wedge \widetilde{\tau}]}⁡ e^{\frac{p}{2} \theta t} \widetilde X(t)^p&+\bigg (\int_0^{\widetilde T\wedge \widetilde{\tau}}\!\frac{e^{\theta t}}{\widetilde a(t)}|\widetilde Z_t|^2\,dt\bigg )^{p/2}\biggr]\nonumber\\
&\le c(p)\cdot \mathbb E\biggl [e^{p/2 \theta(\widetilde T\wedge \widetilde{\tau})} \widetilde X(\widetilde T\wedge\widetilde\tau)^p+\bigg (\int_0^{\widetilde T\wedge \widetilde{\tau}}\! l e^{\theta s/2} \widetilde x(s)\,ds\bigg )^p\biggr ]\nonumber
\end{align}
for some constant $c(p)$.
Sending $\widetilde T\rightarrow +\infty$,  Fatou's Lemma ensures that
\[
\mathbb E\bigg [\sup_{t\in [0,\widetilde\tau]}⁡e^{\frac{p}{2} \theta t} \widetilde X(t)^p+\bigg (\int_0^{\widetilde\tau}\!\frac{e^{\theta t}}{\widetilde a(t)} |\widetilde Z_t|^2 \,dt\bigg )^{p/2}\bigg ]\le c(p)\cdot\mathbb E\bigg [e^{p/2 \theta \widetilde\tau} \xi^p+\bigg (l\int_0^{\widetilde\tau}\!e^{\theta s/2} \widetilde x(s)\,ds\bigg )^p\bigg ].
\]

And we see that
\[
\sup_{t\in [0,\widetilde\tau]}⁡e^{p/2 \theta t} \widetilde X(t)^p=\sup_{t\in [0,\tau]}⁡e^{p/2 \theta A(t)} X(t)^p,
\]
\[
\int_0^{\widetilde\tau}\!e^{\theta s/2} \widetilde x(s)\,ds=\int_0^\tau e^{\theta A(s)/2} x(s)\,dA(s)=\int_0^\tau\!a(s)e^{\theta A(s)/2} x(s)\,ds,
\]
\[
\int_0^{\widetilde\tau}\!\frac{e^{\theta t}}{\widetilde a(t)}|\widetilde Z_t|^2\,dt=\int_0^\tau\! \frac{e^{\theta A(t)}} {a(t)} |Z_t|^2\,dA(t)=\int_0^\tau\!e^{\theta A(t)} |Z_t|^2\,dt.
\]
Therefore, we deduce
\begin{align}\label{eq:2}
\mathbb E\bigg [\sup_{t\in [0,\tau]}⁡ e^{p/2 \theta A(t)} &X(t)^p+\bigg (\int_0^\tau\!e^{\theta A(t)} |Z_t |^2\,dt\bigg )^{p/2}\bigg ]\nonumber\\
&\le c(p)\cdot\mathbb E\bigg [e^{p/2 \theta A(\tau)} \xi^p+l^p \bigg (\int_0^\tau\!a(s)e^{\theta A(s)/2}x(s)\,ds\bigg )^p\bigg ]<\infty.
\end{align}
So we have proved that $Z\in M_{\tau,\theta}^p (\mathbb R^{1\times d})$.
Applying Ito's formula to \eqref{eq:1}, we get
\[
 X(t) e^{lA(t\wedge\tau)}=\xi e^{lA(\tau)}+l\int_{t\wedge\tau}^\tau\! a(s) e^{lA(s)} [x(s)-X(s)]\,ds-\int_{t\wedge\tau}^\tau\!e^{lA(s)} Z_s\,dB_s.
\]
From $x(t)\le X(t)$, it follows that
\begin{equation}\label{eq:3}
X(t) e^{lA(t\wedge\tau)} \le\xi e^{lA(\tau)} -\int_{t\wedge\tau}^\tau e^{lA(s)} Z_s\,dB_s.
\end{equation}
From the expression \eqref{eq:2}, Burkholder-Davis-Gundy's inequality (BDG inequality for short) and Young's inequality, we deduce
\begin{align}
\mathbb E\bigg [\sup_{t\ge 0}⁡|\int_0^{t\wedge\tau}\!e^{lA(s)} Z_s\,dB_s |\bigg]&\le c\cdot \mathbb E\bigg [\bigg(\int_0^\tau\!e^{2lA(s)} |Z_s|^2 \,ds\bigg)^{1/2}\bigg]\nonumber\\
&=c\cdot\mathbb E\bigg[\bigg(\int_0^\tau\! e^{\theta A(s)}  |Z_s|^2\cdot e^{(2l-\theta)A(s)}\,ds\bigg)^{1/2}\bigg]\nonumber\\
&\le c\cdot\mathbb E\bigg[e^{(l-\theta/2)A(\tau)\vee 0} \bigg(\int_0^\tau\!|Z_s|^2\cdot e^{\theta A(s)}\,ds\bigg)^{1/2}\bigg]\nonumber\\
&\le \frac{c}{p} \mathbb E\bigg[\bigg(\int_0^\tau\!|Z_s|^2\cdot e^{\theta A(s)}\,ds\bigg)^{p/2}\bigg]+\frac{c}{q}\mathbb E\big[e^{\frac{q}{2} (2l-\theta)^+A(\tau)}\big]<+\infty\nonumber.
\end{align}
Thus, $M(t)=\int_0^{t\wedge\tau}\!e^{lA(s)}z_s\,dB_s$ is the uniformly integrable martingale.
Taking conditional expectations with respect to $\mathcal F_t$ on both sides of \eqref{eq:3}, we get
\[
X(t) e^{lA(t\wedge\tau)}=\mathbb E[X(t) e^{lA(t\wedge\tau)}|\mathcal{F}_t]\le \mathbb E[\xi e^{lA(\tau)}|\mathcal{F}_t].
\]
So, we have $x(t)\le X(t)\le\mathbb E[\xi e^{l\int_{t\wedge\tau}^\tau\! a(s)\,ds)}|\mathcal {F}_t]$ which is the desired result.
\end{proof}
\begin{rem}
If $A(\tau)=\int_0^{\tau}a(s)ds\le M,\ \mathbb P-a.s.$ for some $M\ge 0$, then 
\begin{align}
\mathbb E\bigg[\bigg(\int_0^\tau\! a(t) e^{\frac{\theta}{2} A(t)} x(t)\,dt\bigg)^p\bigg]&\le\mathbb E\bigg[\bigg(A(\tau)\cdot\sup_{0\le t\le\tau}⁡e^{\frac{\theta}{2} A(t)} x(t)\bigg)^p\bigg]\nonumber\\
&\le M^p\cdot\mathbb E\bigg[\sup_{0\le t\le\tau}⁡e^{p/2 \theta A(t)} x(t)^p\bigg]\nonumber.
\end{align}
So, $x(t)\in H_{\tau,\theta}^{p,a} (\mathbb R)$ holds whenever $x(t)\in H_{\tau,\theta}^{p} (\mathbb R)$. Therefore, in Theorem \ref{thm:1}, we can give an alternative assumption such that $x(t)\in H_{\tau,\theta}^{p} (\mathbb R)$, instead of $x(t)\in H_{\tau,\theta}^{p,a} (\mathbb R)$. 
\end{rem}
\section{Application}\label{sec:4}
In this section we show a comparison principle of $L^p$-solutions of BSDEs with random terminal time under stochastic montonicity condition on generator.
Let us consider the following one-dimensional BSDE with random terminal time.
\begin{equation}\label{eq:4}
Y_t=\xi+\int_{t\wedge\tau}^\tau\!f(s,Y_s)\,ds-\int_{t\wedge\tau}^\tau\!Z_s\,dB_s.
\end{equation}
where the terminal time $\tau$ is an $(\mathcal F_t)_{t\ge 0}$-stopping time, the terminal value $\xi$ is an $\mathcal F_{\tau}$-measurable random variable and the generator $f:\Omega\times\mathbb R^+\times\mathbb R\times\mathbb R^{1\times d}\rightarrow\mathbb R$ is $(\mathcal F_t)_{t\ge 0}$-progressive measurable.
The solution of BSDE \eqref{eq:4} is a pair $(Y_t,Z_t )_{t\ge 0}$ of adapted processes such that $Y_t=\xi$ and $Z_t=0$, $\mathbb P-a.s.$ for $t\ge\tau$ and for any $T\ge 0$, 
\[
Y_{t\wedge\tau}=Y_{T\wedge\tau}+\int_{t\wedge\tau}^{T\wedge\tau}\!f(s,Y_s)\,ds-\int_{t∧τ}^{T\wedge\tau}\!Z_s\,dB_s,\ 0\le t\le T.
\]
For the convenience, we characterize the BSDE \eqref{eq:4} by a triple $(\tau,\xi,f)$.
\begin{thm}\label{thm:2}
Let $p>1$ and consider two BSDEs with data $(\tau,\xi^1,f^1)$ and $(\tau,\xi^2,f^2)$. 
Let $(Y^1,Z^1)$ and $(Y^2,Z^2)$ be two solutions of the BSDE \eqref{eq:4} corresponding to $(\tau,\xi^1,f^1)$ and $(\tau,\xi^2,f^2)$, respectively. Suppose that $f$ is stochastic monotone in $y$, that is, there exists a non-negative, progressively process $a(t)$ such that for all $y,y'\in\mathbb R$;
\[
(y-y')(f^1(t,y)-f^1(t,y'))\le a_t (y-y')^2.
\]
We assume that $a_t>\varepsilon$ for some $\varepsilon>0$. Set  $A(t):=\int_0^t\!a_s\,ds$. Let $q$ be a constant such that $1/p+1/q=1$. Suppose that $(Y^i,Z^i), i=1,2$ belongs to $H_{\tau,\theta}^{p,a} (\mathbb R)\times M_{\tau,0}^p (\mathbb R^{1\times d})$ for some $\theta\in\mathbb R$ such that $\mathbb E[e^{\frac{q}{2} (2-\theta)^+A(\tau)}]<+\infty$.
~\\ If $\xi^1\le \xi^2$ and $f^1(t,Y_t^2)\le f^2(t,Y_t^2)$, then $Y_t^1\le Y_t^2$, $\mathbb P-a.s.$
\end{thm}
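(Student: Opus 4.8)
The plan is to apply the stochastic Gronwall inequality of Theorem \ref{thm:1} to the positive part of the difference of the two solutions. Put $\hat Y_t:=Y_t^1-Y_t^2$, $\hat Z_t:=Z_t^1-Z_t^2$ and $\hat\xi:=\xi^1-\xi^2\le 0$, so that, subtracting the two equations \eqref{eq:4}, for every $T>0$
\[
\hat Y_{t\wedge\tau}=\hat Y_{T\wedge\tau}+\int_{t\wedge\tau}^{T\wedge\tau}\!\big(f^1(s,Y_s^1)-f^2(s,Y_s^2)\big)\,ds-\int_{t\wedge\tau}^{T\wedge\tau}\!\hat Z_s\,dB_s .
\]
Applying the It\^o--Tanaka formula to $(\hat Y)^+$ on $[t\wedge\tau,T\wedge\tau]$ and discarding the non-negative local time of $\hat Y$ at $0$ with the correct sign gives
\[
(\hat Y_{t\wedge\tau})^+\le (\hat Y_{T\wedge\tau})^+ +\int_{t\wedge\tau}^{T\wedge\tau}\!\mathbf 1_{\{\hat Y_s>0\}}\big(f^1(s,Y_s^1)-f^2(s,Y_s^2)\big)\,ds-\int_{t\wedge\tau}^{T\wedge\tau}\!\mathbf 1_{\{\hat Y_s>0\}}\hat Z_s\,dB_s .
\]
On $\{\hat Y_s>0\}$ I would split $f^1(s,Y_s^1)-f^2(s,Y_s^2)=\big(f^1(s,Y_s^1)-f^1(s,Y_s^2)\big)+\big(f^1(s,Y_s^2)-f^2(s,Y_s^2)\big)$; the second bracket is $\le 0$ by hypothesis, while dividing the stochastic monotonicity inequality by $\hat Y_s>0$ gives $f^1(s,Y_s^1)-f^1(s,Y_s^2)\le a_s\hat Y_s=a_s(\hat Y_s)^+$. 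Hence $\mathbf 1_{\{\hat Y_s>0\}}\big(f^1(s,Y_s^1)-f^2(s,Y_s^2)\big)\le a_s(\hat Y_s)^+$.

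Next I would take $\mathbb E[\,\cdot\mid\mathcal F_t]$ on both sides. Since $\hat Z\in M_{\tau,0}^p(\mathbb R^{1\times d})$, the Burkholder-Davis-Gundy inequality together with $p>1$ shows that $s\mapsto\int_0^{s\wedge\tau}\mathbf 1_{\{\hat Y_r>0\}}\hat Z_r\,dB_r$ is a uniformly integrable martingale, so the conditional expectation of the stochastic integral vanishes. Letting $T\to+\infty$, the drift term increases to $\int_{t\wedge\tau}^{\tau}a_s(\hat Y_s)^+\,ds$, which is integrable because $(\hat Y)^+\le|Y^1|+|Y^2|$ and $Y^i\in H_{\tau,\theta}^{p,a}(\mathbb R)$, while $(\hat Y_{T\wedge\tau})^+\to(\hat\xi)^+=0$ thanks to $\xi^1\le\xi^2$ and the terminal condition $Y_t^i=\xi^i$ for $t\ge\tau$. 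Dominated/monotone convergence for conditional expectations then yields
\[
(\hat Y_{t\wedge\tau})^+\le \mathbb E\Big[\int_{t\wedge\tau}^{\tau}\!a_s(\hat Y_s)^+\,ds\ \Big|\ \mathcal F_t\Big],\qquad\mathbb P\text{-a.s.}
\]

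Now I would invoke Theorem \ref{thm:1} with $x(t):=(\hat Y_t)^+$, the non-negative random variable $\xi:=0$, the constant $l:=1$, and the same $\tau$, $a(\cdot)$, $\theta$, $q$. All the hypotheses are met: $a_t>\varepsilon$; $x(\cdot)\in H_{\tau,\theta}^{p,a}(\mathbb R)$ since that space contains $|Y^1|+|Y^2|$; $0\in L_\theta^p(\mathcal F_\tau)$ trivially; the moment condition $\mathbb E[e^{\frac q2(2l-\theta)^+A(\tau)}]=\mathbb E[e^{\frac q2(2-\theta)^+A(\tau)}]<\infty$ is exactly the standing assumption; and the displayed inequality is precisely $x(t)\le\mathbb E[\xi+l\int_{t\wedge\tau}^{\tau}a(s)x(s)\,ds\mid\mathcal F_t]$. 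The conclusion of Theorem \ref{thm:1} then reads $(\hat Y_t)^+\le\mathbb E[\,0\cdot e^{\int_{t\wedge\tau}^{\tau}a(s)\,ds}\mid\mathcal F_t]=0$, that is $Y_t^1\le Y_t^2$ $\mathbb P$-a.s.

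The It\^o--Tanaka expansion and the driver estimate are the routine part; the delicate point is the integrability bookkeeping forced by the \emph{random, possibly unbounded} horizon $\tau$: one must verify that the driver difference and the stochastic integral are integrable over all of $[0,\tau]$, that the stopped stochastic integral is a genuine (not merely local) martingale, and that the passage $T\to+\infty$ is legitimate. Working with $(\hat Y)^+$ rather than with $((\hat Y)^+)^2$ (the na\"ive choice) is what makes this go through: it is linear in $\hat Y$, hence stays in $H_{\tau,\theta}^{p,a}(\mathbb R)$, and it produces the monotonicity constant $l=1$, which is precisely why the hypothesis $\mathbb E[e^{\frac q2(2-\theta)^+A(\tau)}]<\infty$ is the right one to invoke Theorem \ref{thm:1}.
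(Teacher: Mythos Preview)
Your proposal is correct and follows essentially the same route as the paper: define the difference, apply the It\^o--Tanaka formula to its positive part, use the driver splitting together with the stochastic monotonicity and $f^1(t,Y_t^2)\le f^2(t,Y_t^2)$ to obtain $\mathbf 1_{\{\hat Y_s>0\}}(f^1-f^2)\le a_s(\hat Y_s)^+$, take conditional expectations to kill the stochastic integral, and then invoke Theorem~\ref{thm:1} with $\xi=0$ and $l=1$. The only difference is that you work on $[t\wedge\tau,T\wedge\tau]$ and pass to the limit $T\to\infty$ with explicit integrability justifications (BDG for the stochastic integral, the $H_{\tau,\theta}^{p,a}$ bound for the drift), whereas the paper writes the estimate directly on $[t\wedge\tau,\tau]$ and simply asserts the martingale property; your extra bookkeeping is a welcome clarification rather than a different argument.
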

\begin{proof}
Define $\bar Y:=Y^1-Y^2,\bar Z:=Z^1-Z^2,\bar\xi:=\xi^1-\xi^2$, then
\[
\bar Y_t=\bar\xi+\int_{t\wedge\tau}^\tau\![f^1 (s,Y_s^1)-f^2 (s,Y_s^2)]\,ds-\int_{t\wedge\tau}^\tau\!\bar Z_s\,dB_s.
\]
By the virtue of Ito-Tanaka's formula (see Exercise VI.1.25 in \cite{Revuz} for details), we have
\begin{align}
\bar Y_t^+ &=\bar \xi^+ +\int_{t\wedge\tau}^{\tau}\! \textbf{1}_{\bar Y_s>0} [f^1 (s,Y_s^1)-f^2 (s,Y_s^2)]\,ds-\int_{t\wedge\tau}^{\tau}\! \textbf{1}_{\bar Y_s>0} \bar Z_s\,dB_s-\frac{1}{2}L_t^0\nonumber\\
&\le \bar \xi^+ +\int_{t\wedge\tau}^{\tau}\! \textbf{1}_{\bar Y_s>0} [f^1 (s,Y_s^1)-f^2 (s,Y_s^2)]\,ds-\int_{t\wedge\tau}^{\tau}\! \textbf{1}_{\bar Y_s>0} \bar Z_s\,dB_s,\nonumber
\end{align}
where $L_t^0$ is the local time of semi-martingale $\bar Y_t$, it is increasing and $L_0^0=0$.
~\\
Using this and the assumptions of the theorem, we get
\begin{align}
\bar Y_t^+&\le \bar \xi^+ +\int_{t\wedge\tau}^{\tau}\! \textbf{1}_{\bar Y_s>0} [f^1 (s,Y_s^1)-f^1 (s,Y_s^2)]\,ds-\int_{t\wedge\tau}^{\tau}\! \textbf{1}_{\bar Y_s>0} \bar Z_s\,dB_s\nonumber\\
&=\bar \xi^+ +\int_{t\wedge\tau}^{\tau}\! \textbf{1}_{\bar Y_s>0}\frac{\bar Y_s}{|\bar Y_s|} [f^1 (s,Y_s^1)-f^1 (s,Y_s^2)]\,ds-\int_{t\wedge\tau}^{\tau}\! \textbf{1}_{\bar Y_s>0} \bar Z_s\,dB_s\nonumber\\
&\le \bar \xi^+ +\int_{t\wedge\tau}^{\tau}\! a_s \bar Y_s^+\,ds-\int_{t\wedge\tau}^{\tau}\! \textbf{1}_{\bar Y_s>0} \bar Z_s\,dB_s.\nonumber
\end{align}
Since $\big\{\int_0^{t\wedge\tau}\! \textbf{1}_{\bar Y_s>0} \bar Z_s\,dB_s,\ t\ge 0\big\}$ is a martingale, we obtain
\[
\bar Y_t^+ \le \mathbb E\bigg[\bar\xi^+ +\int_{t\wedge\tau}^\tau\!a_s \bar Y_s^+\,ds\bigg |\mathcal F_t\bigg].
\]
From $\xi^1\le \xi^2$, it follows that $(\bar\xi^+ )^p=0$.
Theorem \ref{thm:1} yields that $\bar \xi^+ =0$. Hence, $Y_t^1\le Y_t^2$.
\end{proof}


\begin{thebibliography}{}
\bibitem{bellman} R. Bellmen, The stability of solutions of linear differential equations, Duke Math. J., 10, 643-647 (1943)
\bibitem{Bihari} I. Bihari, A generalization of a lemma of Bellman and its application to uniqueness problems of differential equations, Acta Math. Hung., 7, 81-94 (1956)
\bibitem{Briand} P. Briand, B. Delyon, Y. Hu and E. Pardoux, L. Stoica, $L^p$ solutions of Backward  Stochastic Differential Equations, Stochastic Process. Appl., 108, 109-129 (2003)
\bibitem{Chen} C. J. Chen, W. S. Cheung and D. Zhao, Gronwall-Bellman-type integral inequalities and applications to BVPs, Acta Math. Hung., 258569, 1-15 (2009)
\bibitem{Gronwall} T. H. Gronwall, Note on the derivatives with respect to a parameter of the solutions of a system of differential equations, Ann. Math., 40, 292-296 (1956)
\bibitem{Pachpatte} B. G. Pachpatte,Inequalities for Differential and Integral Equations, Academic Press, San Diego (1988).
\bibitem{Pardoux} E. Pardoux and A. Rasscanu, Stochastic Differential Equations, Backward SDEs, Partial Differential Equations, Springer, Cham (2014)
\bibitem{Revuz} D. Revuz and M. Yor, Continuous Martingales and Brownian Motion, Springer, Berlin (1999)
\bibitem{Wang} X. Wang and S. Fan, A class of stochastic Gronwall's inequality and its application, J. Inequal. Appl., 336, 1-10 (2018)
\end{thebibliography}
\end{document}